\documentclass{amsart}

\usepackage{esint}
\usepackage{amsfonts}
\usepackage{amssymb}
\usepackage{xcolor}
\usepackage{url, hyperref}

\newtheorem{theorem}{Theorem}[section]

\newtheorem{corollary}[theorem]{Corollary}

\newtheorem{proposition}[theorem]{Proposition}

\numberwithin{equation}{section}

\begin{document}
\title[Bottom spectrum and parabolicity of 3-manifolds]{Bottom spectrum and parabolicity of 3-manifolds with scalar curvature lower bound}

\begin{abstract}
Under a necessary topological assumption, two global results are established for complete three dimensional manifolds. The first one
provides a sharp upper bound for the bottom spectrum in terms of the scalar curvature lower bound. The second one shows that such manifolds 
do not admit any positive Green's function if the scalar curvature is bounded from below by a positive constant.
\end{abstract}

\author{Ovidiu Munteanu and Jiaping Wang}

\address{Department of Mathematics, University of Connecticut, Storrs, CT
06268, USA}
\email{ovidiu.munteanu@uconn.edu}
\address{School of Mathematics, University of Minnesota, Minneapolis, MN
55455, USA}
\email{jiaping@math.umn.edu}

\maketitle

\section{Introduction}

In this short note, we aim to study two issues for 
complete three dimensional manifolds with scalar curvature
lower bound. One concerns the size of the 
bottom spectrum and another the nonexistence of positive Green's functions.

Recall that the bottom spectrum $\lambda _{1}\left( M\right) $ of a complete manifold $M$
is defined to be the smallest 
$\lambda \in \sigma(\Delta),$ where $\sigma(\Delta)$ denotes the spectrum of the Laplacian of $M.$
Alternatively, it may be characterized as the optimal constant in the  Poincar\'e inequality
\begin{equation*}
\lambda _{1}\left( M\right)\,\int_{M}\varphi ^{2}\leq \int_{M}\left\vert \nabla \varphi \right\vert ^{2}
\end{equation*}
for all smooth function $\varphi$ with compact support on $M.$

According to Cheng \cite{C}, the bottom spectrum $\lambda_1(M)$ of an $n$-dimensional complete manifold $M$ with its
Ricci curvature $\mathrm{Ric}\geq -\left( n-1\right) K$ for some nonnegative
constant $K$ must satisfy the following sharp upper bound:
\begin{equation*}
\lambda _{1}\left( M\right) \leq \frac{\left( n-1\right) ^{2}}{4}K.
\end{equation*}

This conclusion fails under a scalar curvature lower bound for $n\geq 4$ as the product manifold $\mathbb{H}^{n-2}\times \mathbb{S}^2(r)$ of the hyperbolic space
$\mathbb{H}^{n-2}$ with the sphere $\mathbb{S}^2(r)$ of radius $r$ has positive bottom spectrum, yet its scalar curvature is positive for $r$ small. 
Even for $n=3,$ the conclusion does not hold, either. Indeed, the universal cover $M$ of the connected sum 
$N=\left(\mathbb{S}^2\times \mathbb{S}^{1}\right)\#\left(\mathbb{S}^2\times \mathbb{S}^{1}\right)$ carries a metric with uniformly positive scalar curvature 
as $N$ does so \cite{GL, SY1}. Note that the first fundamental group of $N$ is a free group on two generators, hence nonamenable.
According to a result of Brooks \cite{B}, the bottom spectrum $\lambda_1(M)$ of $M$ is positive. 

One of our purposes here is to show that, nevertheless, a result parallel to Cheng's still holds after excluding the aforementioned example through
a suitable topological assumption.

\begin{theorem}
\label{A} Let $\left( M,g\right) $ be a three dimensional complete
Riemannian manifold with scalar curvature $S\geq -6K$ on $M$ for some
nonnegative constant $K.$ Suppose that $M$ has finitely many ends and its
first Betti number $b_{1}(M)<\infty.$ Then the bottom spectrum of $M$
satisfies
\begin{equation*}
\lambda _{1}\left( M\right) \leq K.
\end{equation*}
\end{theorem}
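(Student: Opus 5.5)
Argue by contradiction: assume $\lambda_1(M) > K$. Since $\lambda_1(M)>0$, $M$ is nonparabolic, so we can build harmonic functions with controlled gradients and apply Stern's level-set formula. That formula works on level sets of harmonic functions on 3-manifolds and involves scalar curvature and the Euler characteristic of the level sets. The topological hypotheses (finitely many ends, $b_1(M)<\infty$) serve to control the topology of those level sets, namely to show that generic level sets are eventually connected with bounded genus contributions. This is what rules out the $\left(\mathbb{S}^2\times\mathbb{S}^1\right)\#\left(\mathbb{S}^2\times\mathbb{S}^1\right)$ cover.

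\textbf{Step 1 (construction).} Positive bottom spectrum implies some end $E$ is nonparabolic. Using $\lambda_1>0$ and finitely many ends, construct a harmonic function $u$ on $M$ that is either a barrier-type function ($0<u<1$, tending to $1$ along $E$ and to $0$ along the other ends) or, if $M$ has a single end, a positive Green's function $G$ with pole $p$. By the Li--Wang theory for $\lambda_1>0$, such $u$ satisfies the decay estimate
\begin{equation*}
\int_{\{u=t\}}|\nabla u| = \text{const}, \qquad \int_{\{a<u<b\}}|\nabla u|^2 \le C(b-a).
\end{equation*}
It also satisfies the exponential volume-type estimate
\begin{equation*}
\int_{B(R)\cap\{\cdots\}} |\nabla u|^2 \le C e^{-2\sqrt{\lambda_1}R}
\end{equation*}
away from the pole or core.

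\textbf{Step 2 (Stern-type inequality).} On regular level sets $\Sigma_t$ we have
\begin{equation*}
\Delta|\nabla u| \ge \frac{1}{2|\nabla u|}\Big(|\nabla^2u|^2/\ldots\Big) + \frac12\big(S-S_{\Sigma_t}\big)|\nabla u|,
\end{equation*}
where the key identity is $\mathrm{Ric}(\nabla u,\nabla u)=\frac12(S-S_{\Sigma})|\nabla u|^2+\ldots$. Integrate this against a cutoff $\phi(r)$ over $\{a<u<b\}$ and apply coarea. Gauss--Bonnet gives $\int_{\Sigma_t}S_{\Sigma_t}=4\pi\chi(\Sigma_t)$. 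The first Betti number bound, together with the finitely many ends, implies that each regular level set $\Sigma_t$ has at most $C$ sphere components contributing positively. A Mayer--Vietoris argument shows that components with nonzero genus or extra components must produce classes in $H_1$ or extra ends. Hence $\int_a^b 2\pi\chi(\Sigma_t)\,dt$ is bounded, uniformly in the cutoff radius.

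\textbf{Step 3 (choice of test function).} Apply the inequality to $|\nabla u|^{1/2}$-type quantities, or more precisely test the Poincar\'e inequality with $\varphi=|\nabla u|^{1/2}\phi$. Refined Kato for harmonic functions in dimension 3 gives $|\nabla|\nabla u||^2\le \frac23|\nabla^2 u|^2$. Combine the Stern term with $S\ge -6K$: the curvature term then contributes at most $3K\int|\nabla u|\phi^2$. The Poincar\'e inequality $\lambda_1\int\varphi^2\le\int|\nabla\varphi|^2$, weighed against the Bochner--Stern inequality, yields
\begin{equation*}
(\lambda_1 - K)\int_M |\nabla u|\,\phi^2 \le C\int |\nabla u||\nabla\phi|^2 + C\cdot(\text{topological term}).
\end{equation*}
Since $\int_M|\nabla u|$ over $\{a<u<b\}$ equals $(b-a)$ times the flux, letting $\phi\to1$ and then $b-a\to\infty$ (or sending the range to cover $(0,1)$ with a logarithmic reparametrization $v=\log u$ or similar) contradicts $\lambda_1>K$. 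The constants must be matched exactly so that the combination of Kato, Stern and Poincar\'e produces precisely $\lambda_1\le K$. This is where the power $|\nabla u|^{1/2}$ and the $6K$ normalization should fit.

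\textbf{Main obstacle.} I expect the main difficulty to be Step 2: showing that the Euler characteristic contribution of level sets is controlled using only $b_1<\infty$ and finitely many ends, since level sets may a priori have many sphere components. The Stern term has a favorable sign for genus $\ge1$ but not for spheres. A secondary difficulty is making the cutoff error terms vanish, which requires the exponential decay estimates of Step 1 to beat the growth of $\int|\nabla u|$ on balls.
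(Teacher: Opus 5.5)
Your route is close in spirit to the Green's-function level-set argument of \cite{MW}. As the introduction notes, that argument needed an extra lower bound on Ricci curvature. Your proposal never supplies a substitute for that hypothesis, and Step 3, the heart of the argument, is not carried out.

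The constant is not the problem. With $f=|\nabla u|^{1/2}$, Stern's identity and the refined Kato inequality give $f\Delta f\ge \frac12|\nabla f|^2-\frac32Kf^2-\frac14S_{\Sigma_t}f^2$. Using the Poincar\'e inequality a second time to absorb $\frac12\int|\nabla f|^2\phi^2$ yields
\[
\Big(\tfrac32(\lambda_1-K)-\tfrac12\lambda_1\eta\Big)\int_M|\nabla u|\,\phi^2\le C_\eta\int_M|\nabla u|\,|\nabla\phi|^2+\tfrac14\int_M S_{\Sigma_t}|\nabla u|\,\phi^2 .
\]

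The failure is in closing this inequality.
\begin{itemize}
\item With $\phi=\phi(r)$, as you propose, the last term is $\frac14\int\big(\int_{\Sigma_t}S_{\Sigma_t}\phi^2\big)dt$. Gauss--Bonnet says nothing about it, because $\phi$ is not constant on $\Sigma_t$, so the topology from Step 2 cannot be used.
\item With $\phi=\phi(u)$, Gauss--Bonnet applies, but the error term becomes $C_\eta\int\phi'(t)^2\big(\int_{\Sigma_t}|\nabla u|^2\big)dt$. This must be beaten by $\int\phi(t)^2|\Sigma_t|\,dt$. Note that by co-area $\int_{\{a<u<b\}}|\nabla u|=\int_a^b|\Sigma_t|\,dt$; it is $\int_{\{a<u<b\}}|\nabla u|^2$ that equals $(b-a)$ times the flux, and your closing step conflates the two. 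The natural way to control this error is a pointwise bound $|\nabla\ln u|\le C$ of Cheng--Yau type, which requires $\mathrm{Ric}\ge -k$.
\item The Li--Wang decay estimates are $L^2$ bounds. Turning them into control of $\int|\nabla u|$ on annuli needs an upper bound on volume growth, and a scalar curvature lower bound does not give one.
\end{itemize}
So the cutoff errors are not a secondary difficulty. They are exactly where this method needs hypotheses the theorem does not have.

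The paper avoids harmonic functions entirely. It takes the positive solution $w$ of $\Delta w=-\lambda_1 w$ and uses $u=w^\gamma$, $\gamma=6/(2+\delta)$, as the weight of a warped $\mu$-bubble in the annulus $\{R-L\le d\le R+L\}$, with $h=\frac{2\pi}{\delta L}\tan\big(\frac{\pi}{2L}(R-d)\big)$. The limit $\gamma\to 3$ is forced by requiring both $3K-\gamma\lambda_1<0$ and a positive coefficient $\gamma-\frac{4+\delta}{12}\gamma^2$ in front of $|\nabla w|^2/w^2$; this is what makes the constant sharp.

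Topology enters only through Proposition \ref{X}: a single \emph{connected} component $\Sigma_R$ separates $p$ from infinity, so $\int_{\Sigma_R}K_{\Sigma_R}\le 4\pi$ with no cutoff at all. Taking $\psi=u^{-1/2}$ in the second variation gives $\int_{\Sigma_R}\big(|\nabla\ln w|^2+1\big)\le C$, independent of $R$. Integrating $\lambda_1\le-\Delta\ln w$ over the domain $D_R$ bounded by $\Sigma_R$ then gives $\lambda_1\mathrm{Vol}(D_R)\le\int_{\Sigma_R}|\nabla\ln w|\le C$. Hence $M$ has finite volume, contradicting $\lambda_1>0$.

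Every estimate lives on a single compact surface, so no gradient estimates or volume growth control are needed. That is why the Ricci assumption can be dropped.
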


Note that the theorem is not applicable to the example mentioned above as it has infinitely many ends. 
It should also be remarked that the same conclusion has been established previously in \cite{MW} under an additional assumption that the Ricci curvature
of $M$ is bounded from below by a constant, where the proof involves the consideration of the level sets of the minimal positive Green's function. 
For spin manifolds of arbitrary dimension, under topological assumptions of a different nature from ours, sharp estimates for the bottom spectrum are available 
from the work of Davaux \cite{D}, and more recently from \cite{WZ1, Liu}. For the rigidity issue, while we do not address it here, we refer to \cite{MW, WZ2, BC}
for relevant results.

Our second purpose is to prove a nonexistence result of positive Green's function for complete three dimensional manifolds with scalar curvature bounded from below
by a positive constant. Again, the same topological assumption as in the previous theorem is required in order to exclude the example alluded earlier. Indeed,
a complete manifold with positive bottom spectrum always admits a positive Green's function.

\begin{theorem}
\label{B} Let $\left( M,g\right) $ be a three dimensional complete
Riemannian manifold with scalar curvature $S\geq 1$ on $M.$ 
Suppose that $M$ has finitely many ends and its
first Betti number $b_{1}(M)<\infty.$ Then $M$ does not admit any positive Green's function.
\end{theorem}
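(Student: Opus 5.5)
We argue by contradiction. Suppose $M$ admits a positive Green's function, and let $G(x)=G(p,x)$ be the minimal positive Green's function with pole at a fixed point $p$. The plan is to study the level sets $\{G=t\}$, or equivalently of $u=G^{-1}$. The topological hypotheses enter here: since $M$ has finitely many ends and $b_1(M)<\infty$, we can choose $t$ small enough, or restrict to a suitable end and exhaustion, that the regular level sets are connected closed surfaces. Their genus is controlled in the sense needed for Gauss--Bonnet, namely $\int_{l(t)}K_{l(t)}\le 4\pi$ for each component, apart from finitely many exceptional values. The mechanism is that a nonseparating surface produces a nontrivial class in $H^1$, and each new end produces a new component. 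So beyond a compact set, the number of components and the handles of level sets can only contribute a bounded error.

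We then use the Stern / Bray--Kazaras--Khuri--Stern identity for a harmonic function in dimension three:
\[
\int_{\{a<u<b\}} \left(\frac{|\nabla^2 u|^2}{|\nabla u|} + S|\nabla u|\right)
\le \int_a^b\left(4\pi \chi(l(t))\right)dt + \text{boundary terms}.
\]
This comes from the Bochner formula combined with the traced Gauss equation, $2\,\mathrm{Ric}(\nu,\nu)=S-S_{l}+H^2-|A|^2$, and Gauss--Bonnet on each level. Since $S\ge 1$, we get for $t$ in a large interval $(a,b)$:
\[
\int_a^b\int_{l(t)} |\nabla u|\,dt\le C+4\pi\chi_{\max}(b-a).
\]
On the other hand, using the normalization $\int_{\{G=t\}}|\nabla G|=1$ and the co-area formula, $\int_{\{u=t\}}|\nabla u|=t^{2}\int_{\{G=1/t\}}|\nabla G|=t^2$. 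The left side therefore grows like $b^3/3$, while the right side grows only linearly in $b$. This is a contradiction as $b\to\infty$ (or $t\to\infty$ in $u$, i.e.\ going toward the ends where $G\to0$).

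The key steps, in order, are as follows.
\begin{enumerate}
\item Use minimality of $G$ to note that $\inf G=0$ on some end, so that $u$ is proper there, or at least has unbounded range.
\item Use the topological hypotheses to bound $\chi$ of the level sets from above by a constant, uniformly for $t$ large. With finitely many ends the level sets have boundedly many components, and $b_1<\infty$ makes the contribution of the positive-genus part at most finite.
\item Integrate the Stern-type inequality over $\{a<u<b\}$, controlling the boundary terms $\int_{l(b)}|\nabla u|\,(\cdots)$ by $\int_{l(b)}|\nabla|\nabla u||$. Do this after multiplying by a cutoff in $t$ to avoid needing pointwise bounds.
\item Compare the growth rates to reach the contradiction.
\end{enumerate}

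The main obstacle is step 3 together with properness. Without a Ricci lower bound, gradient estimates for $G$ are unavailable, so the boundary terms must be handled purely by integrating against a cutoff function $\phi(u)$ and using the identity $\int_{l(t)}|\nabla u|=t^2$. In addition, when $G$ does not tend to zero on all ends, the sets $\{u<b\}$ are noncompact, and one must approximate by Dirichlet Green's functions $G_R$ on an exhaustion $B_R$, apply the argument there, and pass to the limit. I would first try writing the Stern inequality with the weight $\phi(u)$ on $B_R$ using $G_R$, for which the level sets are compact, and then let $R\to\infty$.
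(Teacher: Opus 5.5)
Your proposal has a genuine gap, and it is in your step 3: the boundary term on the side toward infinity. Integrating Stern's identity $2\Delta|\nabla v| = |\nabla^2 v|^2/|\nabla v| + (S-2K_t)|\nabla v|$ for a harmonic $v$ against a cutoff $\phi(v)$, you can absorb the $\phi'$ error into the Hessian term only at the cost of $\int \frac{\phi'(v)^2}{\phi(v)}|\nabla v|^3=\int \frac{\phi'(t)^2}{\phi(t)}\big(\int_{l(t)}|\nabla v|^2\big)\,dt$. The flux normalization $\int_{l(t)}|\nabla v|=\mathrm{const}$ says nothing about $\int_{l(t)}|\nabla v|^2$. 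Controlling it needs a pointwise bound like $|\nabla G|\le C\,G$, i.e.\ a Ricci lower bound, which is exactly what this theorem avoids. If you instead keep a sharp outer boundary (a level set of $G$, or $\partial B_R$ for $G_R$), the boundary term is $\int H|\nabla v|$. Even with a good weight, this reduces to needing a bound on $\int H^2$ over that surface, which you do not have; passing to the exhaustion does not change this.

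The missing idea is to \emph{manufacture} an outer boundary with controlled mean curvature. In each annulus $\{R-L\le d\le R+L\}$, take a $\mu$-bubble with $u\equiv 1$ and $h=\frac{4\pi}{L}\tan\big(\frac{\pi}{2L}(R-d)\big)$. Its second variation with $\psi=1$, together with $|A|^2\ge \frac12 H^2$, $H=h$, $S\ge 1$ and $\int_\Sigma K_\Sigma\le 4\pi$, gives a connected surface $\Sigma_R$ separating $p$ from infinity with $\int_{\Sigma_R}(H^2+1)\le 16\pi$. So $S\ge1$ is used here too, not only in Stern's inequality. One then solves $\Delta u_R=0$ between a fixed inner surface $\Gamma$ (where $u_R=1$) and $\Sigma_R$ (where $u_R=0$), and integrates Stern's inequality against the weight $1-\frac{u_R}{2}$. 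The weight's derivative contributes $-\frac12\int_{\Sigma_R}|\nabla u_R|^2$, which absorbs $\int_{\Sigma_R}|H||\nabla u_R|\le 8\pi+\frac12\int_{\Sigma_R}|\nabla u_R|^2$. With connected level sets, this yields $\int_{D_R}|\nabla u_R|\le C$, where $C$ depends only on boundary data of $u_R$ at $\Gamma$ and stays bounded as $R\to\infty$.

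Your endgame is also wrong as written. By co-area, $\int_{\{a<u<b\}}S|\nabla u|\ge\int_a^b|l(t)|\,dt$, the integral of the \emph{areas} of the level sets. It is not $\int_a^b\int_{l(t)}|\nabla u|\,dt$, which equals $\int_{\{a<u<b\}}|\nabla u|^2$. Moreover $u=G^{-1}$ is not harmonic, so Stern's identity does not apply to it. For the harmonic $G$ the flux through level sets is constant, so no $b^3$ versus $b$ mismatch arises. The contradiction must come instead from the $L^1$ gradient bound. It passes to $w=\lim u_R$, which is a nonconstant harmonic function on the end if $M$ is nonparabolic. But the positive constant flux of $w$ through $\partial B_p(r)$, combined with co-area in $r$, forces $\int_{B_p(R)\setminus B_p(R_0)}|\nabla w|\ge c\,(R-R_0)\to\infty$. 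Finally, as you suspected, $\inf G=0$ does not make $G^{-1}$ proper.
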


While the assumption on the scalar curvature lower bound can certainly be relaxed, for example, to the form $S\geq \frac{c}{r(x)+1},$
where $r(x)$ is the geodesic distance to a fixed point, the optimal form is unclear to us presently. As pointed out above, the topological assumption is
necessary for the theorem to hold.

The proofs of both results rely on the $\mu$-bubble techniques. By definition, $\mu$-bubbles are surfaces that are stationary
for the prescribed mean curvature functional. Recall it was Schoen and Yau \cite{SY} who have initiated the program of using 
minimal surfaces to study three dimensional manifolds with nonnegative scalar curvature. $\mu$-bubbles, as natural extensions of minimal surfaces,
seem to be more versatile in studying scalar curvature. The usage of $\mu$-bubbles is not new. It has already appeared in the work of Gromov \cite{G1},
as well as in the work of Witten and Yau \cite{WY}, where they proved the nonexistence of wormholes for a class of conformally compact
Einstein manifolds. There has been much more development in recent years. In addition to \cite{G}, we refer to \cite{Z, CL, CLS, APX, HKKZ} 
and the references therein for further information.

For the proof of Theorem \ref{A}, we argue by contradiction and use the positive eigenfunction $w$ corresponding to the bottom spectrum
to construct a sequence of minimizing warped $\mu$-bubbles. The second variation formula and the largeness of the bottom spectrum enable
us to obtain a sequence of compact surfaces with uniform area bound and energy bound for the function $\ln w$ on the surfaces. This information
then leads to the conclusion that the volume of the whole manifold must be finite, an obvious contradiction to the positivity of the bottom spectrum.
The proof of Theorem \ref{B} follows a parallel strategy but with very different technical details. Again, one uses $\mu$-bubbles to construct a sequence
of compact surfaces with the squared integral of the mean curvature uniformly bounded. If the manifold admits a positive Green's function, then 
the existence of such surfaces allows one to conclude that the minimal positive Green's function must have finite $L^1$-energy. 
However, this is impossible by the co-area formula.

The paper is arranged as follows. In Section 2, we recall some preliminary facts concerning warped $\mu$-bubbles. Section 3 is devoted to the proof 
of the upper bound of the bottom spectrum, while Section 4 contains the proof of the nonexistence of positive Green's function.

\textbf{Acknowledgment:}   The first author was
partially supported by a Simons Foundation
grant.

\section{Warped $\mu$-bubbles}

In this section, we collect some basic facts about the warped $\mu$-bubbles which will be needed for our proofs.
The main references are \cite{Z}, \cite{G} and \cite{CL}. 

Consider an $n$-dimensional compact Riemannian manifold $N$ with boundary and assume its boundary $\partial N=\partial_{+} N\cup \partial_{-}N$, a union of two
disjoint components. For a smooth function $u>0$ on $N$ and a smooth function $h$ on the interior of $N$ with $h\to +\infty$ on $\partial_{+} N$ and $h\to -\infty$ 
on $\partial_{-} N,$ define the following functional 

\begin{equation}
P\left( \Omega \right) :=\int_{\partial ^{\ast }\Omega }u-\int_{N}\left(
\chi _{\Omega }-\chi _{\Omega _{0}}\right) h\,u \label{P}
\end{equation}
for all Caccioppoli subsets $\Omega\subset N$ with the symmetric difference
$\Omega\Delta \Omega_0$ contained in the interior of $N,$ where 
$\partial ^{\ast }\Omega$ is the reduced boundary of $\Omega$
given by $\partial \Omega\setminus \partial N$ and $\Omega_0$ is a fixed smooth domain in $N$
with $\partial_{+} N\subset \partial \Omega_0$ and $\partial_{-} N\cap \overline{\Omega_0}=\emptyset.$

When $n\leq 7,$ there exists a smooth domain $\Omega$ minimizing the functional $P$ (see \cite{Z, CL} for a proof). 
Such $\Omega$ is called a warped $\mu$-bubble following \cite{CL}. The existence proof shows that the reduced boundary
$\partial ^{\ast }\Omega$ of a warped $\mu$-bubble must be of positive distance away from both $\partial_{+} N$ and $\partial_{-} N.$

By the first variation formula,
the mean curvature $H$ of $\partial ^{\ast }\Omega$ is given by
\begin{equation}
H=h-\frac{u_{\nu }}{u},  \label{a2}
\end{equation}
where $\nu$ is the outward unit normal vector to $\Omega.$
Moreover, by the second variation formula, one has
\begin{eqnarray*} 
0 &\leq &\int_{\partial ^{\ast }\Omega }\left( \left\vert \nabla _{\partial ^{\ast }\Omega 
}\psi \right\vert ^{2}u-\left\vert A\right\vert ^{2}\psi ^{2}u-
\frac{u_{\nu }^{2}}{u}\psi ^{2}-\mathrm{Ric}(\nu, \nu)\,\psi
^{2}u\right) \\
&&+\int_{\partial ^{\ast }\Omega }\left( \left( \Delta u-\Delta _{\partial
^{\ast }\Omega }u\right) \psi ^{2}-H\,u_{\nu}\psi ^{2}-h_{\nu }\psi
^{2}u\right)
\end{eqnarray*}
for all $\psi \in C^{\infty }\left( \partial ^{\ast }\Omega \right),$ where 
$A$ and $\Delta _{\partial ^{\ast }\Omega }$ denote the second fundamental form
and the Laplacian of $\partial ^{\ast}\Omega,$ respectively.

In the case $n=3,$ by using a rearrangement idea due to Schoen and Yau \cite{SY}, 
the second variation formula can be rewritten into (see \cite{CL})
\begin{eqnarray}\label{a3}
0 &\leq &\int_{\partial ^{\ast }\Omega }\left( \left\vert \nabla _{\partial ^{\ast }\Omega 
}\psi \right\vert ^{2}u-\frac{1}{2}\left\vert A\right\vert ^{2}\psi ^{2}u-%
\frac{1}{2}\frac{u_{\nu }^{2}}{u}\psi ^{2}+K_{\partial ^{\ast }\Omega }\psi
^{2}u\right) \\
&&+\int_{\partial ^{\ast }\Omega }\left( \left( \Delta u-\Delta _{\partial
^{\ast }\Omega }u\right) \psi ^{2}-\frac{1}{2}h^{2}\psi ^{2}u-h_{\nu }\psi
^{2}u-\frac{1}{2}S\psi ^{2}u\right) \notag
\end{eqnarray}
for all $\psi \in C^{\infty }\left( \partial ^{\ast }\Omega \right),$ where $S$ is the scalar curvature of $M$ and
$K_{\partial^{\ast }\Omega }$ the Gauss curvature of $\partial ^{\ast}\Omega,$ respectively.

Now consider a complete noncompact three-dimensional manifold $M^{3}$ with one end and finite first Betti number. 
Let $d$ be a smoothing of the distance function to a fixed point $p\in M$ such that $d$ remains proper and satisfies 
$\left\vert \nabla d\right\vert \leq 2.$ 
For sufficiently large $L_0$ with all the representatives of the first homology $H_1(M)$ lying inside the set $\{d<L_0\},$
according to \cite{LT}, the boundary $\partial E_L$ of the only unbounded component $E_L$ of $M\setminus \{d\leq L\}$  
must be connected for all $L>L_0.$ 
In the following sections, we will consider warped $\mu$-bubbles in a $3$-dimensional manifold $N=E_{L_1}\setminus E_{L_2},$
where $L_1<L_2$ are regular values of $d$ with $L_0<L_1<L_2.$ Obviously,  
the set $N$ is a compact smooth manifold with boundary
components $\partial _{+}N=\partial E_{L_1}$ and $\partial _{-}N=\partial E_{L_2}.$
The function $h=h\left( d\right) $ will be taken as a smooth function depending on $d$ with
\begin{equation}
\lim_{d\rightarrow L_1}h\left( d\right) =+\infty ,\ \ \ \ \
\lim_{d\rightarrow L_2}h\left( d\right) =-\infty.  \label{a1}
\end{equation}

\begin{proposition}\label{X}
Let $M$ be a complete noncompact three dimensional manifold $M^{3}$ with one end and finite first Betti number
and $N=E_{L_1}\setminus E_{L_2}$ as above. Then for a warped $\mu$-bubble $\Omega$ in $N,$ there exists  
a connected component $\Sigma$ of $\partial ^{\ast }\Omega $ that separates the point
$p\in M$ from the infinity of $M.$ 
\end{proposition}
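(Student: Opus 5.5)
We argue via mod 2 intersection numbers with a proper ray. Set $\Sigma_0:=\partial^{\ast}\Omega$, a smooth closed embedded surface in the interior of $N$, at positive distance from $\partial_{\pm}N$. Since $\Omega\,\Delta\,\Omega_0$ lies in the interior of $N$, the set $\Omega$ contains a collar of $\partial_+N=\partial E_{L_1}$ and misses a collar of $\partial_-N=\partial E_{L_2}$. Let $W:=(M\setminus E_{L_1})\cup\Omega$. This is a region of $M$ containing $p$, with $\partial W=\Sigma_0$, and $M\setminus W\supset E_{L_2}$ contains the whole end. In this sense $\Sigma_0$ as a whole separates $p$ from infinity. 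The task is to show that a single connected component already does.

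Fix a proper curve $\gamma:[0,\infty)\to M$ from $p$ to infinity, transversal to $\Sigma_0$. Because $\gamma$ starts in $W$ and eventually stays in $M\setminus W$, the number of intersection points $\#(\gamma\cap\Sigma_0)$ is odd. Decompose $\Sigma_0=\Sigma_1\cup\dots\cup\Sigma_k$ into its finitely many connected components. Then some component, say $\Sigma_1$, meets $\gamma$ an odd number of times. Suppose $\Sigma_1$ did not separate $p$ from infinity. Then $p$ and the end would lie in the same component of $M\setminus\Sigma_1$, since $M$ has one end and $\Sigma_1$ is compact, so the unbounded part of $M\setminus\Sigma_1$ is in one component. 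So there is a closed curve $\sigma$, running from $p$ out along $\gamma$ to a point beyond all of $N$ and then back to $p$ inside $M\setminus\Sigma_1$, whose mod 2 intersection number with $\Sigma_1$ is $1$. A closed curve meeting a closed embedded surface an odd number of times means $\Sigma_1$ is nonseparating in $M$. It also gives a class in $H_1(M;\mathbb{Z}_2)$ pairing nontrivially with $[\Sigma_1]$.

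The main obstacle is contradicting this with the topology, using the choice $L_1>L_0$. By the choice of $L_0$, every class in $H_1(M)$, and hence in $H_1(M;\mathbb{Z}_2)$ by universal coefficients together with $b_1<\infty$, has a representative $\tau$ inside $\{d<L_0\}\subset M\setminus E_{L_1}$. But $\Sigma_1\subset N\subset \overline{E_{L_1}}$, and it stays away from $\partial E_{L_1}$, so $\tau\cap\Sigma_1=\emptyset$. The intersection pairing of $[\sigma]=[\tau]$ with $[\Sigma_1]$ is homological, so it would be $0$, a contradiction. The delicate point is exactly here. I would phrase it cleanly via the $\mathbb{Z}_2$ intersection pairing between $H_1(M;\mathbb{Z}_2)$ and closed surfaces. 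If torsion classes in $\mathbb{Z}_2$-homology cause trouble, I would instead invoke the \cite{LT} fact already quoted: $\partial E_L$ is connected for $L>L_0$, which amounts to the statement that no compact hypersurface outside $\{d\le L_0\}$ is nonseparating. The alternative is to argue directly that a nonseparating $\Sigma_1$ outside $\{d\le L_0\}$ would produce a loop whose homology class cannot be pushed into $\{d<L_0\}$. Either way, $\Sigma_1$ must separate $p$ from infinity, so $\Sigma:=\Sigma_1$ is the desired component.
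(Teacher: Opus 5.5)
Your parity argument is fine up to the construction of the closed loop $\sigma$ meeting $\Sigma_1$ an odd number of times. The homological contradiction, however, fails as you set it up with $\mathbb{Z}_2$ coefficients.

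All that $b_1(M)<\infty$ provides is that $H_1(M;\mathbb{R})$ is spanned by finitely many loops, which one places in $\{d<L_0\}$. It gives no control of torsion, so neither $H_1(M;\mathbb{Z})$ nor $H_1(M;\mathbb{Z}_2)$ need be carried by any compact set. Reading the choice of $L_0$ as a statement about integral homology would be an assumption the theorem does not make. For example, $M=\mathbb{R}^3\#\mathbb{RP}^3\#\mathbb{RP}^3\#\cdots$, with the summands placed along a ray, has one end and $b_1(M)=0$, yet $H_1(M;\mathbb{Z}_2)\cong\bigoplus_{i\geq 1}\mathbb{Z}_2$.

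Worse, in this $M$ a projective plane inside a far-out summand is a closed embedded surface outside $\{d\le L_0\}$. It is nonseparating and is met exactly once by a projective line. Your argument uses nothing beyond mod $2$ intersection numbers, so it would rule out this configuration too; hence it cannot be correct. The same example refutes your fallback that connectedness of $\partial E_L$ ``amounts to'' no compact hypersurface outside $\{d\le L_0\}$ being nonseparating. Besides, connectedness of the particular level sets $\partial E_L$ says nothing formal about the arbitrary surface $\Sigma_1$.

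The missing idea is two-sidedness. Because $\Sigma_1$ is a component of $\partial\Omega$, it has a global unit normal. So the signed count of transverse intersections with $\Sigma_1$ defines a homomorphism $I:H_1(M;\mathbb{Z})\to\mathbb{Z}$: $I([\sigma])$ is the degree of $f\circ\sigma$, where $f:M\to S^1$ collapses the complement of a collar of $\Sigma_1$. No orientability of $M$ is needed. Being $\mathbb{Z}$-valued, $I$ kills torsion and extends to $H_1(M;\mathbb{R})\to\mathbb{R}$. Your loop has $I(\sigma)$ odd, hence nonzero. On the other hand, $I$ vanishes on loops in $\{d<L_0\}$, since they miss $\Sigma_1\subset\{d\ge L_1\}$, and these loops span $H_1(M;\mathbb{R})$. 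That is the contradiction, and it is exactly the Li--Tam mechanism.

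With this repair your proof is a valid variant of the paper's. The paper instead takes $\Sigma=\partial E_\Omega$, the boundary of the unbounded component $E_\Omega$ of $E_{L_1}\setminus\Omega$, and shows by the Li--Tam argument that it is connected. It is then a single component of $\partial^{\ast}\Omega$ and separates $p$ from infinity by construction, which avoids your parity count along a ray.
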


\begin{proof}
As $M$ has only one end, the set $E_{L_1}\setminus \Omega$ has one unbounded component $E_{\Omega}.$ 
Arguing as in \cite{LT}, one concludes that the smooth boundary of $E_{\Omega}$ must be connected. 
As noted above, the reduced boundary $\partial ^{\ast }\Omega $ is of positive distance from $\partial E_{L_1}.$
So $\partial E_{\Omega}$ must be a component of $\partial ^{\ast }\Omega.$ Obviously, it separates the point
$p\in M$ from the infinity of $M.$ 
This shows that $\Sigma=\partial E_{\Omega}$
is the desired component.
\end{proof}

\section{Bottom spectrum estimate}

We are now ready to prove the following sharp comparison result for the bottom spectrum. 

\begin{theorem}
Let $\left( M^{3},g\right) $ be complete noncompact three dimensional with
scalar curvature $S\geq -6K$ for $K\geq 0.$ Assume that $M$ has finitely
many ends and finite first Betti number. Then the bottom spectrum must satisfy
$\lambda _{1}\left( M\right)\leq K.$
\end{theorem}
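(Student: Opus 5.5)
We argue by contradiction and suppose $\lambda_1(M)>K$. First we reduce to the case of one end. If $M$ has several ends and $\lambda_1(M)>0$, then every end is nonparabolic. We pass to a single end $E$ (the closure of one unbounded component outside a large compact set). By domain monotonicity, $\lambda_1(E)\ge\lambda_1(M)$ for the Dirichlet problem on $E$. The $\mu$-bubble construction below only uses the region far out in one end, so it works on $E$. Alternatively, we may simply run the argument inside one end $E$; then $\partial E_L$ is connected for large $L$ by the Betti number assumption, as in Section 2. We use the positive function $w$ on $M$ with $\Delta w=-\lambda_1 w$, which exists by the standard exhaustion argument. Since $\lambda_1>0$, $M$ has infinite volume in each end (Brooks), and this is what we will contradict.

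The main construction is as follows. We take $u=w^{\alpha}$ for a suitable exponent $\alpha$ (probably $\alpha=1$, or chosen to optimize the constants), and $h=h(d)$ blowing up at $d=L_1,L_2$ as in (\ref{a1}). For each $L_1<L_2$ we obtain a warped $\mu$-bubble $\Omega$. By Proposition \ref{X}, there is a component $\Sigma$ of $\partial^*\Omega$ separating $p$ from infinity and lying in $\{L_1<d<L_2\}$. We put $\psi\equiv1$ in (\ref{a3}) on $\Sigma$; Gauss–Bonnet gives $\int_\Sigma K_\Sigma u\le 4\pi\sup u$. We then integrate by parts: $\int_\Sigma\Delta_\Sigma u=0$. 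We write $\Delta u-\Delta_\Sigma u=\nabla^2u(\nu,\nu)+Hu_\nu$, or more conveniently we use the equation for $w$ to write $\Delta u=-\alpha\lambda_1 u+\alpha(\alpha-1)|\nabla w|^2w^{\alpha-2}$. Combining this with $H=h-u_\nu/u$, we get the key inequality
\[
\int_\Sigma\Big(\tfrac12|A|^2+c\,|\nabla_\Sigma\ln w|^2+\big(\alpha\lambda_1-3K\ \text{(from } S\ge-6K)+\tfrac12h^2+h_\nu\big)\Big)u\ \lesssim\ 2\pi\cdot\text{(const)}.
\]
The arithmetic point is that, with the right $\alpha$ and after completing squares in $u_\nu/u$ and $h$, the gap $\lambda_1-K>0$ appears as a positive coefficient $\delta$ in front of $\int_\Sigma u$. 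This holds provided $h$ is chosen to satisfy $\tfrac12h^2-|h'|\cdot 2\ge -\epsilon$ away from the ends of the interval. A choice like $h=-C\tan(\cdot)$ on an interval of fixed length $L_2-L_1=\ell$ (with $\ell$ large depending on $\delta$) works. Getting these constants to close exactly at the threshold $K$ is the main obstacle. To handle it, we first try $\alpha$ determined so that the term $-\tfrac12u_\nu^2/u$ combines with $(\alpha-1)|\nabla w|^2$ terms into a nonpositive square. We also normalize by dividing $w$ by its value near $\Sigma$, using Harnack on unit balls if curvature permits, or else we work with $\int_\Sigma u$ directly, so that the output is a uniform bound on the weighted area $\int_\Sigma u$ and on $\int_\Sigma|\nabla_\Sigma \ln w|^2u$.

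Finally we deduce finite volume. Taking $L_1=k\ell$, we obtain separating surfaces $\Sigma_k\subset\{k\ell<d<(k+1)\ell\}$ with uniformly bounded weighted area and bounded tangential energy of $\ln w$. We then use these as cutoff boundaries. We test the Poincaré inequality, or directly the equation $\Delta w=-\lambda_1w$ integrated over the region $D_k$ bounded by $\Sigma_k$, which gives $\lambda_1\int_{D_k}w=-\int_{\Sigma_k}w_\nu$. The uniform bounds on $\Sigma_k$ control the boundary flux independently of $k$, so $\int_M w<\infty$ (suitably weighted). Combined with the lower bound $w\ge$ const on $\Sigma$-regions coming from the normalization, this forces $\mathrm{Vol}(M)<\infty$. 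That contradicts $\lambda_1(M)>0$, since for finite volume the constants are in $L^2$ and $\lambda_1=0$. Hence $\lambda_1(M)\le K$.
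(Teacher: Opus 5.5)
You have the right architecture: a warped $\mu$-bubble weighted by a power of the ground state $w$, separating surfaces in annuli of fixed width, and a contradiction with finite volume. But the three steps that make it work are missing or would fail.

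\emph{The test function.} Testing (\ref{a3}) with $\psi\equiv 1$ keeps the weight $u$ in every term. The bound $\int_\Sigma K_\Sigma u\le 4\pi\sup_\Sigma u$ does not follow from Gauss--Bonnet, and it fails in general because $K_\Sigma$ has no sign. Even a correct bound in terms of $\sup_\Sigma u$ would be useless: with only a scalar curvature lower bound there is no Harnack inequality comparing $\sup_\Sigma w$ and $\inf_\Sigma w$. The paper takes $\psi=u^{-1/2}$, which makes the inequality unweighted. Gauss--Bonnet then applies as $\int_\Sigma K_\Sigma\le 4\pi$, and $|\nabla_\Sigma\psi|^2u-\psi^2\Delta_\Sigma u$ integrates to $-\frac34\int_\Sigma|\nabla_\Sigma u|^2/u^2$.

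\emph{The exponent.} This is the crux of sharpness, not a detail. Use $|A|^2\ge\frac12H^2$ with $H=h-u_\nu/u$ and absorb the cross term $\frac12h\,u_\nu/u$. That term consumes all but a $\delta$-fraction of $h^2$, so the condition on $h$ is $-\frac{\delta}{2}h^2+|\nabla h|\le 2\pi^2/(\delta L^2)$, not $\frac12h^2-2|h'|\ge-\epsilon$. The full gradient $|\nabla u|^2/u^2$ then carries coefficient $-\frac{8-\delta}{12}\approx-\frac23$, while $\Delta u/u=-\gamma\lambda_1+\gamma(\gamma-1)|\nabla w|^2/w^2$ for $u=w^\gamma$. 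You therefore need roughly $\gamma(\gamma-1)-\frac23\gamma^2<0$, i.e.\ $\gamma<3$, together with $\gamma\lambda_1>3K$. Your guess $\gamma=1$ only yields $\lambda_1\le 3K$; the sharp bound needs $\gamma=6/(2+\delta)\to 3$.

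\emph{The volume step.} It does not close as written. The flux $-\int_{\Sigma_k}w_\nu$ involves only the normal derivative of $w$, while the bounds you list are the tangential energy $\int_\Sigma|\nabla_\Sigma\ln w|^2u$ and a weighted area. Even with full gradient control, passing from $\lambda_1\int_{D_k}w$ to $\mathrm{Vol}(D_k)$ via the minimum principle ($w$ is superharmonic) costs the factor $\sup_{\Sigma_k}w/\inf_{\Sigma_k}w$. That is again a Harnack inequality along $\Sigma_k$, which is unavailable, and renormalizing $w$ near each $\Sigma_k$ does not remove it.

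The missing device is the identity $\lambda_1=-\Delta\ln w-|\nabla\ln w|^2$, which integrated over $D_R$ gives
\[
\lambda_1\,\mathrm{Vol}(D_R)\le\int_{\Sigma_R}|\nabla\ln w|\le\frac12\int_{\Sigma_R}\left(|\nabla\ln w|^2+1\right).
\]
This estimate is invariant under scaling of $w$, so no normalization is needed. It requires an unweighted bound on $\int_\Sigma(|\nabla w|^2/w^2+1)$ with the full gradient, and the $\psi=u^{-1/2}$ computation provides exactly that. Both $-\frac34|\nabla_\Sigma u|^2/u^2$ and $-\frac{8-\delta}{12}u_\nu^2/u^2$ appear, and for $\gamma=6/(2+\delta)$ they dominate the term $\gamma(\gamma-1)|\nabla w|^2/w^2$ from $\Delta u/u$ by a margin of order $\delta$. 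With $L=2(K+1)\pi/\varepsilon$ this gives $\int_\Sigma(|\nabla w|^2/w^2+1)\le 8(K+1)\pi/\varepsilon$.
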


\begin{proof}
Our argument is inspired by the work \cite{APX, HKKZ}.
Suppose by contradiction that there exists $\varepsilon >0$ such that 
\begin{equation}
\lambda _{1}\left( M\right) \geq K+\varepsilon.  \label{a0}
\end{equation}
Then there exists smooth function $w>0$ satisfying
\begin{equation}
\Delta w=-\lambda _{1}\left( M\right) w\text{ \ on \ }M.  \label{w}
\end{equation}

Since our argument uses only the geometry of $M$ outside an arbitrarily large compact
set, we may assume without loss of generality that $M$ has one end with
finite first Betti number.
As mentioned in the previous section, we will construct warped
$\mu$-bubbles in annulus region $N=E_{L_1}\setminus E_{L_2}$ of the end $E,$
where $$L_1=R-L \;\;\;\text{and}\;\;\; L_2=R+L,$$ with $R$ being an arbitrarily large number and $L$ a fixed number
to be chosen. Set $\Omega_0=E_{L_1}\setminus E_{R},$ $$u=w^{\gamma },$$ and 
\begin{equation*}
h\left( d\right) =\frac{2\pi }{\delta L}\tan \left( \frac{\pi }{2L}(R-d)\right)
\end{equation*} 
in (\ref{P}), where 
\begin{equation*}
\gamma =\frac{6}{2+\delta }
\end{equation*}
and 
\begin{equation*}
\delta =\frac{\varepsilon }{K+1}.
\end{equation*}
Without loss of generality, we may assume that $\varepsilon<\frac{1}{2}$, so  that $\delta <\frac{1}{2}$ as well. 
Obviously, the function $h=h\left( d\right) $ satisfies
\begin{equation*}
\lim_{d\rightarrow L_1}h\left( d\right) =+\infty ,\ \ \ \ \
\lim_{d\rightarrow L_2}h\left( d\right) =-\infty .  
\end{equation*}
So there exists a smooth warped $\mu$-bubble $\Omega.$ Moreover, by Proposition \ref{X},
there is a connected component $\Sigma \subset \partial ^{\ast }\Omega $
that separates $p\in M$ from the infinity of $M.$ On $\Sigma,$ the second variation formula (\ref{a3}) becomes
\begin{eqnarray*}
0 &\leq &\int_{\Sigma}\left( \left\vert \nabla _{\Sigma
}\psi \right\vert ^{2}u-\frac{1}{2}\left\vert A\right\vert ^{2}\psi ^{2}u-%
\frac{1}{2}\frac{u_{\nu }^{2}}{u}\psi ^{2}+K_{\Sigma}\psi
^{2}u\right) \\
&&+\int_{\Sigma }\left( \left( \Delta u-\Delta _{\Sigma}u\right) \psi ^{2}-\frac{1}{2}h^{2}\psi ^{2}u-h_{\nu }\psi
^{2}u-\frac{1}{2}S\psi ^{2}u\right)
\end{eqnarray*}%
for all $\psi \in C^{\infty }\left( \Sigma \right).$ 

For $\psi =\frac{1}{\sqrt{u}}$ it follows that%
\begin{eqnarray}
0 &\leq &\int_{\Sigma }\left( \frac{1}{4}\frac{\left\vert \nabla _{\Sigma
}u\right\vert ^{2}}{u^{2}}-\frac{1}{2}\frac{u_{\nu }^{2}}{u^{2}}+\frac{%
\Delta u-\Delta _{\Sigma }u}{u}\right)  \label{a3'} \\
&&+\int_{\Sigma }\left( -\frac{1}{2}\left\vert A\right\vert ^{2}+K_{\Sigma }-%
\frac{1}{2}h^{2}-h_{\nu }-\frac{1}{2}S\right) .  \notag
\end{eqnarray}%
Integrating by parts we have that 
\begin{equation*}
-\int_{\Sigma }\frac{\Delta _{\Sigma }u}{u}=\int_{\Sigma }\frac{\left\vert
\nabla _{\Sigma }u\right\vert ^{2}}{u^{2}}.
\end{equation*}%
Moreover, in view of (\ref{a2}), we have
\begin{equation*}
\left\vert A\right\vert ^{2}\geq \frac{1}{2}H^{2}=\frac{1}{2}\left( h-\frac{%
u_{\nu }}{u}\right) ^{2}.
\end{equation*}%
Therefore, (\ref{a3'}) becomes%
\begin{eqnarray}
0 &\leq &\int_{\Sigma }\left( -\frac{3}{4}\frac{\left\vert \nabla _{\Sigma
}u\right\vert ^{2}}{u^{2}}-\frac{1}{2}\frac{u_{\nu }^{2}}{u^{2}}-\frac{1}{4}%
\left( h-\frac{u_{\nu }}{u}\right) ^{2}\right)  \label{a4} \\
&&+\int_{\Sigma }\left( \frac{\Delta u}{u}+K_{\Sigma }-\frac{1}{2}%
h^{2}-h_{\nu }-\frac{1}{2}S\right) .  \notag
\end{eqnarray}%
Using the fact that $S\geq -6K$ together with the Gauss-Bonnet formula $\int_{\Sigma }K_{\Sigma }\leq 4\pi,$ we rewrite 
(\ref{a4}) into 
\begin{eqnarray}
0 &\leq &\int_{\Sigma }\left( -\frac{3}{4}\frac{\left\vert \nabla _{\Sigma
}u\right\vert ^{2}}{u^{2}}-\frac{3}{4}\frac{u_{\nu }^{2}}{u^{2}}+\frac{1}{2}h%
\frac{u_{\nu }}{u}\right)  \label{a5} \\
&&+\int_{\Sigma }\left( \frac{\Delta u}{u}-\frac{3}{4}h^{2}+\left\vert
\nabla h\right\vert +3K\right) +4\pi .  \notag
\end{eqnarray}%
As $\delta>0$, we have the inequality 
\begin{equation*}
\frac{1}{2}h\frac{u_{\nu }}{u}\leq \frac{3}{4}\frac{1}{1+\delta }h^{2}+\frac{%
1}{12}\left( 1+\delta \right) \frac{u_{\nu }^{2}}{u^{2}}.
\end{equation*}%
Plugging into (\ref{a5}) implies that
\begin{eqnarray*}
0 &\leq &\int_{\Sigma }\left( -\frac{3}{4}\frac{\left\vert \nabla _{\Sigma
}u\right\vert ^{2}}{u^{2}}-\frac{8-\delta }{12}\frac{u_{\nu }^{2}}{u^{2}}%
\right) \\
&&+\int_{\Sigma }\left( \frac{\Delta u}{u}-\frac{3}{4}\frac{\delta }{%
1+\delta }h^{2}+\left\vert \nabla h\right\vert +3K\right) +4\pi .
\end{eqnarray*}%
In particular, since $\delta<\frac{1}{2}$, 
this proves%
\begin{equation}
0\leq \int_{\Sigma }\left( -\frac{8-\delta }{12}\frac{\left\vert \nabla
u\right\vert ^{2}}{u^{2}}+\frac{\Delta u}{u}-\frac{1}{2}\delta
h^{2}+\left\vert \nabla h\right\vert +3K\right) +4\pi .  \label{a6}
\end{equation}%
Noting that
\begin{equation*}
h\left( d\right) =\frac{2\pi }{\delta L}\tan \left( \frac{\pi }{2L}(R-d)\right)
\end{equation*}%
and that $\left\vert \nabla d\right\vert \leq 2,$ one gets
\begin{equation*}
-\frac{1}{2}\delta h^{2}+\left\vert \nabla h\right\vert \leq \frac{2\pi ^{2}%
}{\delta L^{2}}.
\end{equation*}%
So (\ref{a6}) becomes
\begin{equation}
0\leq \int_{\Sigma }\left( -\frac{8-\delta }{12}\frac{\left\vert \nabla
u\right\vert ^{2}}{u^{2}}+\frac{\Delta u}{u}+3K+\frac{2\pi ^{2}}{\delta L^{2}%
}\right) +4\pi.  \label{a7}
\end{equation}
Since $u=w^{\gamma }$, we conclude from (\ref{a7}) that
\begin{equation}
0\leq \int_{\Sigma }\left( -\lambda _{1}\left( M\right) \gamma -\left(
\gamma -\frac{4+\delta }{12}\gamma ^{2}\right) \frac{\left\vert \nabla
w\right\vert ^{2}}{w^{2}}+3K+\frac{2\pi ^{2}}{\delta L^{2}}\right) +4\pi.
\label{a9}
\end{equation}%
Recalling that 
\begin{equation*}
\gamma =\frac{6}{2+\delta } \;\;\;\text{and}\;\;\;
\delta =\frac{\varepsilon }{K+1},
\end{equation*}
we have 
\begin{equation*}
-\left( \gamma -\frac{4+\delta }{12}\gamma ^{2}\right) =-\frac{3\delta }{%
\left( 2+\delta \right) ^{2}}\leq -\frac{\delta }{2},
\end{equation*}
and
\begin{equation*}
-\lambda _{1}\left( M\right) \gamma +3K\leq -\varepsilon.
\end{equation*}
Hence, (\ref{a9}) becomes 
\begin{equation*}
0\leq \int_{\Sigma }\left( -\frac{\varepsilon }{2\left( K+1\right) }\frac{%
\left\vert \nabla w\right\vert ^{2}}{w^{2}}-\varepsilon +\frac{2\left(
K+1\right) \pi ^{2}}{\varepsilon L^{2}}\right) +4\pi .
\end{equation*}%
By setting
\begin{equation*}
L=\frac{2\left( K+1\right) \pi }{\varepsilon }
\end{equation*}%
we obtain that 
\begin{equation*}
\int_{\Sigma }\left( \frac{\left\vert \nabla w\right\vert ^{2}}{w^{2}}%
+1\right) \leq \frac{8\left( K+1\right) \pi }{\varepsilon }.
\end{equation*}

In conclusion, for each large $R>1,$ there exists a compact surface $\Sigma_{R},$ 

\begin{equation*}
\Sigma _{R}\subset B_{p}\left( R+2L\right) \setminus B_{p}\left( R-2L\right),
\end{equation*}
that separates the fixed point $p\in M$ from the infinity of $M$ and satisfies

\begin{equation}
\int_{\Sigma _{R}}\left( \frac{\left\vert \nabla w\right\vert ^{2}}{w^{2}}%
+1\right) \leq \frac{8\left( K+1\right) \pi }{\varepsilon }.  \label{a10}
\end{equation}
To complete our argument, let $N_{R}$ be the unbounded component of $M\setminus \Sigma _{R}$ and 
$D_{R}:=M\setminus N_{R}$. That is, $D_{R}$ is the bounded domain of $M$ with
boundary $\Sigma _{R}.$ Clearly,  as $L$ is a fixed constant, there exists a sequence of $R_i\to \infty$
such that $D_{R_i}$ is increasing in $i$ and
\begin{equation}
M=\cup _{i=1}^\infty D_{R_i}.  \label{a11}
\end{equation}

By (\ref{w}), we have
\begin{equation*}
\lambda _{1}\left( M\right) =-\Delta \ln w-\left\vert \nabla \ln
w\right\vert ^{2}\leq -\Delta \ln w.
\end{equation*}%
Integrating the equation over $D_R,$ we conclude that 
\begin{eqnarray*}
\lambda _{1}\left( M\right) \mathrm{Vol}\left( D_{R}\right) &\leq
&-\int_{D_{R}}\Delta \ln w \\
&\leq &\int_{\Sigma _{R}}\left\vert \nabla \ln w\right\vert \\
&\leq &\frac{1}{2}\int_{\Sigma _{R}}\left( \left\vert \nabla \ln
w\right\vert ^{2}+1\right) \\
&\leq &\frac{4\left( K+1\right) \pi }{\varepsilon },
\end{eqnarray*}
where we have used (\ref{a10}).
Hence, the volume of $D_{R}$ is uniformly bounded from above independent of $R.$ In view of (\ref{a11}), it shows that
$M$ must have finite volume as well. This is an obvious contradiction to the fact that the bottom spectrum of $M$ is positive, see Chapter 22 in \cite{Li}.
\end{proof}

In particular, we have the following consequence.

\begin{corollary}
Let $\left( M^{3},g\right) $ be complete noncompact three dimensional with
scalar curvature $S\geq 0$. Assume that $M$ has finitely many ends and
finite first Betti number. Then $\lambda _{1}\left( M\right) =0$.
\end{corollary}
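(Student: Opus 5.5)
This follows directly from the theorem just proved, applied with $K=0$. Since $S\geq 0=-6\cdot 0$ on $M$, the hypotheses of that theorem hold with $K=0$: $M$ is complete, noncompact, three dimensional, has finitely many ends, and has finite first Betti number. The theorem then gives $\lambda_1(M)\leq 0$.

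For the reverse inequality, I use the variational characterization recalled in the introduction. The bottom spectrum is the optimal constant in the Poincar\'e inequality $\lambda_1(M)\int_M\varphi^2\leq\int_M|\nabla\varphi|^2$ over compactly supported smooth $\varphi$. Equivalently, $\lambda_1(M)=\inf_\varphi \int_M|\nabla\varphi|^2\big/\int_M\varphi^2$. Every such Rayleigh quotient is nonnegative, so $\lambda_1(M)\geq 0$. Combining the two inequalities yields $\lambda_1(M)=0$.

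There is essentially no obstacle here. One only needs to check that the case $K=0$ is allowed in the proof by contradiction. There, $\delta=\varepsilon/(K+1)=\varepsilon$ and $L=2\pi/\varepsilon$, and all constants remain finite. So the argument goes through verbatim, and no separate treatment of $K=0$ is needed.
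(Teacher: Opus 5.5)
Your proposal is correct and matches the paper, which states the corollary as an immediate consequence of the preceding theorem applied with $K=0$. The reverse inequality $\lambda_1(M)\geq 0$ from the Rayleigh quotient characterization is the trivial step the paper leaves implicit, and your check that the constants $\delta=\varepsilon$ and $L=2\pi/\varepsilon$ stay finite when $K=0$ is accurate.
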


The following result strengthens the above corollary.

\begin{theorem}
Let $\left( M^{3},g\right) $ be a complete noncompact three dimensional manifold with
scalar curvature $S\geq 0.$ Assume that $M$ has finitely many ends and
finite first Betti number. Then there exists a sequence of geodesic balls $B_p(R_i)$ with
$R_{i}\rightarrow \infty $ such that their first Dirichlet eigenvalues satisfy
 
\begin{equation*}
\lambda _{1}\left( B_{p}\left( R_{i}\right) \right) \leq \frac{C}{R_{i}^{2}}
\end{equation*}%
for a universal constant $C>0.$
\end{theorem}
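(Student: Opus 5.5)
We argue by contradiction, localizing the $\mu$-bubble argument of the previous theorem to a single annulus. Suppose that for a large constant $C$ (to be fixed, universal) we have $\lambda_1(B_p(R))\geq C/R^2$ for all $R\geq R_0$. As before, we may reduce to the case of one end with finite first Betti number, and work in the end $E$. Fix a large $R$ and let $w>0$ be the first Dirichlet eigenfunction of $B_p(4R)$, so $\Delta w=-\lambda w$ on $B_p(4R)$ with $\lambda=\lambda_1(B_p(4R))\geq C/(16R^2)$. The point is that the computation leading to (\ref{a9}) is purely local on $\Sigma$: it only uses the equation for $w$ near $\Sigma$, $S\geq 0$, and Gauss--Bonnet. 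So we construct a warped $\mu$-bubble in $N=E_{L_1}\setminus E_{L_2}$ with $L_1=2R-L$, $L_2=2R+L$, where now $L=R/2$ (scale comparable to $R$). This keeps the annulus inside $B_p(4R)$, where $w$ is positive and smooth; the factor $2$ in $|\nabla d|\leq 2$ only affects constants. We take $u=w^\gamma$ and $h=\frac{2\pi}{\delta L}\tan(\frac{\pi}{2L}(2R-d))$, now with a \emph{fixed} $\delta$, say $\delta=1/4$, since $K=0$.

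Next we run the same second-variation computation. With $K=0$, (\ref{a9}) gives
\[
0\leq \int_\Sigma\Big(-\lambda\gamma-c_\delta\frac{|\nabla w|^2}{w^2}+\frac{2\pi^2}{\delta L^2}\Big)+4\pi ,
\]
with $c_\delta=3\delta/(2+\delta)^2>0$. Here $\lambda\gamma\geq \gamma C/(16R^2)$ while $\frac{2\pi^2}{\delta L^2}=\frac{8\pi^2}{\delta R^2}$. Choosing $C$ universal so that $\lambda\gamma\geq 2\cdot\frac{8\pi^2}{\delta R^2}$ yields, on a component $\Sigma_R$ separating $p$ from infinity (Proposition \ref{X}),
\[
\int_{\Sigma_R}\Big(c\,\frac{|\nabla w|^2}{w^2}+\frac{\lambda}{2}\Big)\leq 4\pi .
\]
In particular $|\Sigma_R|\leq 8\pi/\lambda\lesssim R^2/C$, and $\int_{\Sigma_R}|\nabla\ln w|\leq (\int|\nabla \ln w|^2)^{1/2}|\Sigma_R|^{1/2}\lesssim R/\sqrt{C}$.

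Finally, we integrate $\lambda\leq-\Delta\ln w$ over the bounded region $D_R$ enclosed by $\Sigma_R$, which contains $B_p(R)$ roughly and lies in $B_p(4R)$. This gives
\[
\lambda\,\mathrm{Vol}(D_R)\leq\int_{\Sigma_R}|\nabla\ln w|\lesssim \frac{R}{\sqrt{C}}, \qquad\text{so}\qquad \mathrm{Vol}(B_p(R))\lesssim \frac{R^3}{C^{3/2}} .
\]
This bound alone is not yet a contradiction, and this is the main obstacle: one must play this estimate against a lower bound. The plan is to use instead the test function $\varphi=(R-r)_+$ or a cutoff adapted to $\Sigma_R$. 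Plugging it into $\lambda_1(B_p(4R))\int\varphi^2\leq\int|\nabla\varphi|^2$ gives volume doubling-type growth, $\mathrm{Vol}(B_p(4R))\lesssim \mathrm{Vol}(B_p(R))$ up to factors tied to $C$. Comparing with the area bound from the coarea formula, we expect that a suitably large universal $C$ forces an inconsistency: the volume of the annuli must grow exponentially, like $e^{\sqrt{C}}$ per dyadic scale, by the positive Dirichlet eigenvalue argument of Chapter 22 of \cite{Li} applied at scale $R$, while $\mathrm{Vol}(D_R)\lesssim R^3$ polynomially. Iterating over $R_i=4^iR_0$ should then give the contradiction. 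If this volume comparison proves delicate, an alternative is to choose $\varphi$ as a function of $\ln w$ on the slab between two bubbles $\Sigma_R,\Sigma_{2R}$, bounding $\int|\nabla\varphi|^2$ directly via the bubble estimates.
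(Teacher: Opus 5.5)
Your first half is the paper's argument. You take the Dirichlet eigenfunction of a ball of twice the bubble radius, run the warped $\mu$-bubble computation with $K=0$, and integrate $\lambda\le -\Delta\ln w$ over the region enclosed by $\Sigma_R$. This gives $V_p(R)\lesssim R^3$ for all large $R$; the paper records it as $V_p(R)\le \frac{C_0}{A}R^3$.

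\textbf{The gap is the closing step.} You leave it as an expectation, and the one concrete inequality you state there goes the wrong way. A cutoff in the Dirichlet Poincar\'e inequality does not give a doubling-type upper bound $\mathrm{Vol}(B_p(4R))\lesssim \mathrm{Vol}(B_p(R))$; such a bound would not contradict cubic growth anyway. It gives a \emph{lower} bound for the larger ball. Take $\varphi=1$ on $B_p(R)$, $\varphi=0$ off $B_p(2R)$, and $|\nabla\varphi|\le 2/R$. Then
\begin{equation*}
\lambda_1(B_p(2R))\,V_p(R)\le \frac{4}{R^2}\,V_p(2R),
\end{equation*}
so under your hypothesis $\lambda_1(B_p(2R))\ge C/(4R^2)$ you get $V_p(2R)\ge \frac{C}{16}V_p(R)$ for all $R\ge R_0$.

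This elementary inequality is all that is missing. The exponential-growth argument of Chapter 22 of Li's book is stated for $\lambda_1(M)>0$ and would need a separate localized proof, so you do not need it. Iterating gives
\begin{equation*}
V_p(2^kR_0)\ge \Big(\frac{C}{16}\Big)^k V_p(R_0),
\end{equation*}
while your $\mu$-bubble bound gives $V_p(2^kR_0)\lesssim 8^kR_0^3$. Letting $k\to\infty$ is a contradiction once $C>128$. Your choice of $C$, needed to absorb the $h$-terms, is already of order $\pi^2\cdot 10^2$, so this costs nothing. The paper combines the same two inequalities as the iteration $V_p(R)\le 2^{7m}C_0A^{-m}R^3$ and lets $m\to\infty$ with $A>2^7$.

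With this step supplied your proof is complete and coincides with the paper's. The alternative of building test functions from $\ln w$ between two bubbles is unnecessary.
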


\begin{proof}
We argue by contradiction. Suppose that
\begin{equation}
\lambda _{1}\left( B_{p}\left( R\right) \right) \geq \frac{A}{R^{2}}
\label{m1}
\end{equation}
for all $R>R_{0},$ where both $R_0$ and $A>1$ are constants as large as one wants to specify.

Again, without loss of generality, we may assume that $M$ has one end and its first
Betti number is zero. As in the previous theorem, 
we will construct warped
$\mu$-bubbles in annulus region $N=E_{L_1}\setminus E_{L_2}$ of the end $E,$
where $L_1=R-L$ and $L_2=R+L$ with $R$ being an arbitrarily large number and $L=\frac{R}{4}.$ 
In (\ref{P}), set $\Omega_0=E_{L_1}\setminus E_{R},$ $h$ the function given by
\begin{equation*}
h\left( d\right) =\frac{2\pi }{L}\tan \left( \frac{\pi }{2L}(R-d)\right)
\end{equation*} 
and $u$ the positive Dirichlet eigenfunction of $B_{p}\left(2R\right),$ that is,
\begin{equation*}
\Delta u=-\lambda _{1}\left( B_{p}\left( 2R\right) \right) u.
\end{equation*}
Then there exists a smooth warped $\mu$-bubble $\Omega$ in $N.$ Moreover, by Proposition \ref{X},
there is a connected component $\Sigma \subset \partial ^{\ast }\Omega $
that separates $p\in M$ from the infinity of $M.$

Now the second variation formula (\ref{a4}) implies that
\begin{equation*}
0\leq \int_{\Sigma }\left( \frac{\Delta u}{u}-\frac{1}{2}\frac{\left\vert
\nabla u\right\vert ^{2}}{u^{2}}+K_{\Sigma }-\frac{1}{2}h^{2}-h_{\nu }-\frac{%
1}{2}S\right) .
\end{equation*}%
Since $S\geq 0$ and $\int_{\Sigma }K_{\Sigma }\leq 4\pi $, we get 
\begin{equation}
\int_{\Sigma }\left( -\frac{\Delta u}{u}+\frac{1}{2}\frac{\left\vert \nabla
u\right\vert ^{2}}{u^{2}}+\frac{1}{2}h^{2}-\left\vert \nabla h\right\vert
\right) \leq 4\pi .  \label{m2}
\end{equation}%
Noting that $\Sigma \subset B_{p}\left( 2R\right)\setminus B_{p}\left( \frac{R}{2}\right) $ and that
$h$ satisfies
\begin{equation*}
\frac{1}{2}h^{2}-\left\vert \nabla h\right\vert \geq -\frac{C}{R^{2}},
\end{equation*}%
we conclude from (\ref{m2}) that 
\begin{equation*}
\int_{\Sigma }\left( \lambda _{1}\left( B_{p}\left( 2R\right) \right) -\frac{%
C}{R^{2}}+\frac{1}{2}\frac{\left\vert \nabla u\right\vert ^{2}}{u^{2}}%
\right) \leq 4\pi .
\end{equation*}%
By (\ref{m1}) it follows that 
\begin{equation}
\int_{\Sigma }\frac{\left\vert \nabla u\right\vert }{u}\leq CR.  \label{m3}
\end{equation}%
Denoting with $D$ the compact domain in $M$ with $\partial D=\Sigma,$ we have
\begin{eqnarray*}
\lambda _{1}\left( B_{p}\left( 2R\right) \right) \mathrm{Vol}\left( D\right) 
&=&\int_{D}\left( -\Delta \log u-\left\vert \nabla \log u\right\vert
^{2}\right)  \\
&\leq &-\int_{D}\Delta \log u \\
&\leq &\int_{\Sigma }\frac{\left\vert \nabla u\right\vert }{u} \\
&\leq &CR.
\end{eqnarray*}%
This shows that 
\begin{equation}
\lambda _{1}\left( B_{p}\left( 2R\right) \right) V_{p}\left( \frac{R}{2}%
\right) \leq CR  \label{m4}
\end{equation}%
for all $R>R_{0},$ where $V_p(r)$ denotes the volume of the geodesic ball $B_p(r).$
Appealing to (\ref{m1}), we conclude that 
\begin{equation}
V_{p}\left( R\right) \leq \frac{C_{0}}{A}R^{3}  \label{m5}
\end{equation}%
for all $R>R_{0},$ where $C_{0}>0$ is a universal constant. Let $\phi$ be 
the cut-off function satisfying $\phi =1$ on $B_{p}\left( R\right),$ $\phi =0$
on $M\setminus B_{p}\left( 2R\right) $ and $\left\vert \nabla \phi
\right\vert \leq \frac{2}{R}.$ Then
\begin{eqnarray*}
\lambda _{1}\left( B_{p}\left( 2R\right) \right) V_{p}\left( R\right)  &\leq
&\lambda _{1}\left( B_{p}\left( 2R\right) \right) \int_{M}\phi ^{2} \\
&\leq &\int_{M}\left\vert \nabla \phi \right\vert ^{2} \\
&\leq &\frac{4}{R^{2}}V_{p}\left( 2R\right).
\end{eqnarray*}%
Using (\ref{m1}) and (\ref{m5}) we conclude that 
\begin{equation*}
\frac{A}{(2R)^{2}}V_{p}\left( R\right) \leq \lambda _{1}\left( B_{p}\left(
2R\right) \right) V_{p}\left( R\right) \leq \frac{4}{R^{2}}V_{p}\left(
2R\right) \leq \frac{4}{R^{2}}\frac{C_{0}}{A}\left( 2R\right) ^{3}.
\end{equation*}%
Therefore,
\begin{equation*}
V_{p}\left( R\right) \leq \frac{2^7 C_{0}}{A^{2}}R^{3}
\end{equation*}%
for all $R>R_{0}.$ Iterating this argument $m$ times, we get
that 
\begin{equation*}
V_{p}\left( R\right) \leq \frac{2^{7m}C_{0}}{A^{m}}R^{3}
\end{equation*}%
for all $R>R_{0}.$ Letting $m\rightarrow \infty $ we arrive at an obvious contradiction if $A>2^7$.
\end{proof}

\section{Nonexistence of positive Green's function}

In this section we show that, under a suitable topological assumption, a three-dimensional complete manifold
with uniformly positive scalar curvature must be parabolic, namely, it does not admit
any positive Green's function. We continue to adopt the same notations as before.

\begin{theorem}
Let $\left( M^{3},g\right) $ be a complete noncompact three dimensional with
scalar curvature $S\geq 1.$ Assume that $M$ has finitely many ends and
finite first Betti number. Then $M$ does not admit any positive Green's function. 
\end{theorem}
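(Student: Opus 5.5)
We argue by contradiction and follow the strategy outlined in the introduction. Suppose $M$ admits a positive Green's function, and let $G=G(p,\cdot)$ be the minimal positive Green's function with pole at $p$. As before, since only the geometry outside a large compact set matters, we reduce to the case that $M$ has one end $E$ with finite first Betti number, so that Proposition \ref{X} applies to warped $\mu$-bubbles in annuli $N=E_{L_1}\setminus E_{L_2}$.

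\textbf{Step 1: surfaces with bounded $\int H^2$.} Take $u\equiv 1$ in (\ref{P}), so the warped $\mu$-bubble is an ordinary $\mu$-bubble with $H=h$ by (\ref{a2}). Let $L_1=R-L$, $L_2=R+L$ with $L$ a fixed constant, and let $h(d)=\frac{C}{L}\tan\left(\frac{\pi}{2L}(R-d)\right)$. Choose $L$ large so that $-\frac12 h^2-h_\nu\le \frac14 h^2+\frac{C'}{L^2}$ and $\frac{C'}{L^2}\le \frac14$. Let $\Sigma=\Sigma_R$ be the component from Proposition \ref{X}. With $\psi=1$, inequality (\ref{a3}) combined with $S\ge1$ and $\int_\Sigma K_\Sigma\le 4\pi$ gives
\begin{equation*}
\int_\Sigma\left(\frac12|A|^2+\frac14 h^2+\frac14\right)\le 4\pi .
\end{equation*}
Consequently $\mathrm{Area}(\Sigma_R)\le 16\pi$ and $\int_{\Sigma_R}H^2\le C$, uniformly in $R$. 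Moreover $\Sigma_R$ lies in $B_p(R+2L)\setminus B_p(R-2L)$ and separates $p$ from infinity.

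\textbf{Step 2: finite $L^1$ energy of $G$.} Let $D_R$ be the bounded domain with $\partial D_R=\Sigma_R$, and take $R_i\to\infty$ so that the $D_{R_i}$ exhaust $M$. Since $G$ is harmonic on $M\setminus\{p\}$, its flux through $\Sigma_R$ equals $1$. The aim is to bound $\int_{D_{R}\setminus B_p(1)}|\nabla G|$ uniformly in $R$.

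The plan is to use the Bochner-type identity for $|\nabla G|$ together with a test function built from the $\mu$-bubble. Alternatively, one can estimate $\int_{\Sigma_R}|\nabla G|$ and integrate over a family of such surfaces. The key mechanism is that $\mathrm{div}(\nabla G/|\nabla G|)$ controls the level-set mean curvature. Integrating by parts against $\chi_{D_R}$ produces boundary terms on $\Sigma_R$. These are controlled by $\int_{\Sigma_R}|H|\,G\le \left(\int H^2\right)^{1/2}\mathrm{Area}^{1/2}\sup_{\Sigma_R}G$, and the factor $\sup_{\Sigma_R} G$ is bounded since $G$ is bounded away from $p$.

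A cleaner route I would try first is the following. Since $\Sigma_R$ is a closed surface with bounded area and bounded Willmore energy, the monotonicity formula (Simon) bounds its extrinsic diameter, say $\mathrm{diam}\le C$. Combined with the flux identity $\int_{\Sigma_R}|\nabla G|\ge 1$ and a Harnack/gradient estimate for $G$ on neighborhoods of $\Sigma_R$, this converts into control of $\int_M |\nabla G|$ over the annuli between consecutive $\Sigma_{R_i}$. The expected outcome is $\int_{M\setminus B_p(1)}|\nabla G|<\infty$.

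\textbf{Step 3: contradiction via co-area.} By the co-area formula and the fact that the flux of $G$ through each regular level set $\{G=t\}$ equals $1$,
\begin{equation*}
\int_{\{G<t_0\}}|\nabla G|=\int_0^{t_0}\int_{\{G=t\}}|\nabla G|\,dt=t_0 .
\end{equation*}
Also, $\inf G=0$ at infinity by minimality. Together these show that the energy actually concentrates differently than Step 2 permits, giving the contradiction. More precisely, combining the co-area identity with Step 2 shows that $\int_{\Sigma_R}|\nabla G|\cdot(\text{width})$ summed over infinitely many disjoint annuli diverges, since each annulus carries flux $1$ and fixed width $\sim L$.

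\textbf{Main obstacle.} The main obstacle is Step 2: converting area and $\int H^2$ bounds on $\Sigma_R$, without any Ricci lower bound, into a uniform lower bound on the $|\nabla G|$-mass of each annulus. The Harnack/gradient estimate used there has no a priori constants, so local elliptic estimates must be avoided. I would instead seek an identity in which $H$ appears directly: integrate $\mathrm{div}(\nabla G)$ weighted by a function of $d$ across the $\mu$-bubble foliation, and use Cauchy--Schwarz with $\int H^2\le C$.
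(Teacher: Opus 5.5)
Your Step 1 is essentially the paper's $\mu$-bubble step, up to a sign slip: you need $-\frac12h^2-h_\nu\le-\frac14h^2+C'/L^2$. The contradiction you gesture at in Step 3 is also how the paper ends: the flux through every large geodesic sphere is bounded below, so co-area in the distance function gives $\int_{B_p(R)\setminus B_p(R_0)}|\nabla w|\ge c\,(R-R_0)$. Your displayed identity there is wrong, though. Co-area in $G$ gives $\int_{\{G<t_0\}}|\nabla G|^2=t_0$, while $\int_{\{G<t_0\}}|\nabla G|=\int_0^{t_0}\mathrm{Area}\{G=t\}\,dt$.

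The genuine gap is Step 2. It is the heart of the proof, and you leave it as a list of tentative routes, none of which works as stated:
\begin{itemize}
\item Simon's diameter bound from area and Willmore energy rests on the monotonicity formula, which needs ambient curvature and injectivity radius control.
\item Harnack and gradient estimates need a Ricci lower bound, as you note yourself.
\item Integrating by parts with $G$ over $D_R$ cannot produce $H_{\Sigma_R}$, because $\Sigma_R$ is not a level set of $G$. The quantity $\mathrm{div}(\nabla G/|\nabla G|)$ is the mean curvature of the level sets of $G$, not of $\Sigma_R$.
\item Nothing in your Step 2 uses $S\ge 1$. Without a coercive interior term, area and $\int H^2$ bounds on $\Sigma_R$ alone give no control of an $L^1$ gradient norm.
\end{itemize}

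Here is the mechanism you are missing. Fix $\Gamma=\partial M_0$, where $M_0$ is the unbounded component of $M\setminus\overline{B_p(R_0)}$. Replace $G$ by the solution $u$ of $\Delta u=0$ in the region $D$ between $\Gamma$ and $\Sigma_R$, with $u=1$ on $\Gamma$ and $u=0$ on $\Sigma_R$.

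\emph{Upper bound.} Now $\Sigma_R$ is a level set. With $\nu=\nabla u/|\nabla u|$, it satisfies $H=-|\nabla u|^{-1}\langle\nabla|\nabla u|,\nu\rangle$. Green's formula applied to $\int_D(1-\frac u2)\Delta|\nabla u|$ produces on $\Sigma_R$ exactly $\int_{\Sigma_R}|H||\nabla u|-\frac12\int_{\Sigma_R}|\nabla u|^2$; this is why the weight $1-\frac u2$ is chosen. Cauchy--Schwarz and $\int_{\Sigma_R}H^2\le 16\pi$ then give the upper bound $C_0+8\pi$, where $C_0$ involves only boundary terms on $\Gamma$.

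\emph{Lower bound.} The Bochner formula combined with the Gauss equation on the level sets gives $\Delta|\nabla u|\ge\frac12(S-S_t)|\nabla u|$. Apply the co-area formula and Gauss--Bonnet on the level sets $\{u=t\}$. These are connected by \cite{MW}, which is where the topological hypothesis enters a second time. Together with $S\ge1$ this yields $\int_D(1-\frac u2)\Delta|\nabla u|\ge\frac14\int_D|\nabla u|-3\pi$.

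\emph{Conclusion.} Hence $\int_{D}|\nabla u_R|$ is bounded independently of $R$. If $M$ were nonparabolic, the $u_R$ would converge on $M_0$ to a nonconstant harmonic $w$ with $\int_K|\nabla w|\le C_1$ for every compact $K\subset M_0$. The flux and co-area argument contradicts this.

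Your proposal contains neither this argument nor any substitute that actually uses $S\ge1$ in the interior, so as it stands it does not prove the theorem.
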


\begin{proof}
We may assume without loss of generality that $M$ has only one end.  Let $R_0>0$ be sufficiently large so that representatives of $H_1 (M)$ are
contained in $B_p(R_0)$. Let $M_0$ be the unbounded connected component of $M\setminus \overline{ B_p (R_0)}$ and  $$\Gamma:= \partial M_0$$ its boundary. By \cite{LT}, we know that $\Gamma$ is connected. Without loss of generality, we may also assume that $\Gamma$ is smooth.  

 As in the previous section, for each sufficiently large $R,$ 
consider $N=\left\{ L_1\leq d\leq L_2\right\},$ 
a smooth manifold with boundary components $\partial _{+}N=\left\{d=L_1\right\} $ and 
$\partial _{-}N=\left\{ d=L_2\right\},$ where $L_1=R-L$ and $L_2=R+L$ with $L\gg 1$ being 
a fixed constant. Let $u=1$ and  $h$ the function given by
\begin{equation*}
h\left( d\right) =\frac{4\pi }{L}\tan \left( \frac{\pi }{2L}(R-d)\right). 
\end{equation*}
Clearly,   $h=h\left( d\right) $ satisfies
\begin{equation*}
\lim_{d\rightarrow L_1}h\left( d\right) =+\infty ,\ \ \ \ \
\lim_{d\rightarrow L_2}h\left( d\right) =-\infty .
\end{equation*}

For 
\begin{equation*}
\Omega _{0}:=\left\{ L_1<d<R\right\}, 
\end{equation*}
the functional
\begin{equation*}
P\left( \Omega \right) :=\int_{\partial ^{\ast }\Omega }-\int_{N}\left( \chi
_{\Omega }-\chi _{\Omega _{0}}\right) h
\end{equation*}
admits a smooth minimizer $\Omega$ in $N$ with $\Omega\setminus \Omega_0$ contained in the 
interior of $N.$ Moreover, there exists a component $\Sigma$ of  $\partial ^{\ast }\Omega$
that separates point $p$ from the infinity of $M.$ 
On $\Sigma,$ its mean curvature
\begin{equation}
H=h.  \label{z0}
\end{equation}
The second variation formula (\ref{a3}) for $\psi =1$ together with the inequality 

\begin{equation*}
|A|^2\geq \frac{1}{2}H^2
\end{equation*}
gives

\begin{equation*}
0\leq \int_{\Sigma }\left( K_{\Sigma }-\frac{3}{4}h^{2}-h_{\nu }-\frac{1}{2}S\right) .
\end{equation*}%
Since $S\geq 1$ and $\int_{\Sigma }K_{\Sigma }\leq 4\pi,$ we conclude
\begin{equation}
\int_{\Sigma }\left( \frac{3}{4}h^{2}-\left\vert \nabla h\right\vert +\frac{1%
}{2}\right) \leq 4\pi .  \label{z1}
\end{equation}
As
\begin{equation*}
h\left( d\right) =\frac{4\pi }{L}\tan \left( \frac{\pi }{2L}(R-d)\right),
\end{equation*}
it follows that  
\begin{equation*}
\frac{1}{4}h^{2}-\left\vert \nabla h\right\vert \geq -\frac{4\pi ^{2}}{L^{2}}.
\end{equation*}%
Hence, for $L$ large enough, we get by (\ref{z1}) that  
\begin{equation*}
\int_{\Sigma }\left( h^{2}+1\right) \leq 16\pi.
\end{equation*}%
 Together with (\ref{z0}), it implies that 
\begin{equation}
\int_{\Sigma }H^{2}\leq 16\pi.  \label{z2}
\end{equation}

Let $D$ be the bounded domain of $M$ with boundary $\Sigma $ and $\Gamma$. 
Define $u>0$ to be the solution of the following Dirichlet problem 
\begin{eqnarray}
\Delta u &=&0\text{ \ \ in } D , \notag\\
u &=&1\text{ \ on } \Gamma , \label{D}\\
u &=&0\text{ \ on }\Sigma.\notag
\end{eqnarray}

We claim that 
\begin{equation}
\int_{D}\left( 1-\frac{u}{2}\right) \Delta \left\vert \nabla
u\right\vert \leq C_0+8\pi,  \label{z6}
\end{equation}
where $$
C_0:=\frac{1}{2} \int_{\Gamma }\left\vert \nabla
u\right\vert_\nu +\frac{1}{2} \int_{\Gamma }\left\vert \nabla
u\right\vert,
$$
and $\nu =\frac{\nabla u}{\left\vert \nabla u\right\vert }$ denotes the unit
normal to the level sets of $u$.

Indeed, the Stokes theorem implies that 
\begin{eqnarray*}
\int_{D}\left( \big(1-\frac{u}{2}\big)\Delta \left\vert \nabla u\right\vert -\left\vert
\nabla u\right\vert \Delta \big(1-\frac{u}{2}\big)\right) &=&\int_{\Gamma
}\left(  \big(1-\frac{u}{2}\big)\left\vert \nabla u\right\vert _{\nu }-\left\vert \nabla
u\right\vert \big(1-\frac{u}{2}\big)_\nu\right)\\
&&-\int_{\Sigma
}\left( \big(1-\frac{u}{2}\big)\left\vert \nabla u\right\vert _{\nu }-\left\vert \nabla
u\right\vert \big(1-\frac{u}{2}\big)_\nu\right).
\end{eqnarray*}
Since $u$ is a solution of (\ref{D}) and $\nu =\frac{\nabla u}{\left\vert \nabla u\right\vert }$, it follows that 
\begin{equation} \label{z3}
\int_{D} \big(1-\frac{u}{2}\big)\Delta \left\vert \nabla u\right\vert =C_0-\int_{\Sigma
}\left( \left\vert \nabla u\right\vert _{\nu }+\frac{1}{2}\left\vert \nabla
u\right\vert^2\right).
\end{equation}
As $\Delta u=0,$ the mean curvature of $\Sigma $ can be computed as 
\begin{equation*}
H=-\frac{1}{\left\vert \nabla u\right\vert }\left\vert \nabla u\right\vert
_{\nu }.
\end{equation*}%
Therefore, (\ref{z3}) implies that 
\begin{equation}  \label{z4}
\int_{D} \big(1-\frac{u}{2}\big)\Delta \left\vert \nabla u\right\vert \leq C_0+\int_{\Sigma
}\left\vert H\right\vert \left\vert \nabla u\right\vert-\frac{1}{2}\int_\Sigma \left\vert \nabla
u\right\vert^2.
\end{equation}
  By (\ref{z2}), 
\begin{eqnarray*}
\int_{\Sigma }\left\vert H\right\vert \left\vert \nabla u\right\vert  &\leq &%
\frac{1}{2}\int_{\Sigma }H^{2}+\frac{1}{2}\int_{\Sigma }\left\vert \nabla
u\right\vert ^{2} \\
&\leq &8\pi +\frac{1}{2}\int_{\Sigma }\left\vert \nabla u\right\vert ^{2}.
\end{eqnarray*}%
Combining with (\ref{z4}) we obtain that   
\begin{equation*}
\int_{D}\big(1-\frac{u}{2}\big)\Delta \left\vert \nabla u\right\vert \leq C_0+8\pi. 
\end{equation*}
This proves (\ref{z6}). 

Using the Bochner formula 
\begin{equation*}
\Delta \left\vert \nabla u\right\vert =\left( \left\vert u_{ij}\right\vert
^{2}-\left\vert \nabla \left\vert \nabla u\right\vert \right\vert
^{2}\right) \left\vert \nabla u\right\vert ^{-1}+\mathrm{Ric}\left( \nabla
u,\nabla u\right) \left\vert \nabla u\right\vert ^{-1}
\end{equation*}
together with the Gauss curvature equations on the surface $\{u=t\}$, see e.g. \cite{MW},
\begin{equation*}
\mathrm{Ric}\left( \nabla u,\nabla u\right) \left\vert \nabla u\right\vert
^{-1}=\left( \frac{1}{2}S-\frac{1}{2}S_{t }\right) \left\vert \nabla
u\right\vert +\left( \left\vert \nabla \left\vert \nabla u\right\vert
\right\vert ^{2}-\frac{1}{2}\left\vert \nabla ^{2}u\right\vert ^{2}\right)
\left\vert \nabla u\right\vert ^{-1},
\end{equation*}%
one obtains that
\begin{equation*}
\Delta \left\vert \nabla u\right\vert \geq \left( \frac{1}{2}S-\frac{1}{2}%
S_{t}\right) \left\vert \nabla u\right\vert,
\end{equation*}%
where $S_{t}$ is the scalar curvature of the surface $\{u=t\}$. Noting that 
$1-\frac{u}{2}>0$ and $S\geq 1,$ it follows that 
\begin{eqnarray}\label{z7}
\left( 1-\frac{u}{2}\right)\Delta \left\vert \nabla
u\right\vert  \geq \frac{1}{2}\left( 1-\frac{u}{2}\right)
 \left\vert \nabla u\right\vert- \frac{1}{2}\left( 1-\frac{u}{2}\right)
S_{t} \left\vert \nabla u\right\vert
\end{eqnarray}
on $\{u=t\}$. 
By the co-area formula we have
\begin{eqnarray*}
\int_{D}\left( 1-\frac{u}{2}\right) \Delta \left\vert \nabla
u\right\vert=\int_0^1 \left(1-\frac{t}{2}\right)\left(\int_{\{u=t\}} |\nabla u|^{-1}\Delta |\nabla u| \right)dt.
\end{eqnarray*}
Together with (\ref{z7}), this implies that 
\begin{equation*}
\int_D \left( 1-\frac{u}{2}\right)\Delta \left\vert \nabla
u\right\vert  \geq\frac{1}{2}\int_{D}\left( 1-\frac{u}{2}\right)
\left\vert \nabla u\right\vert-\frac{1}{2}\int_0^1 \left(1-\frac{t}{2}\right)\left(\int_{\{u=t\}}S_t\right) dt.
\end{equation*}
Since the level sets of $u$ are connected by \cite{MW}, the Gauss-Bonnet theorem yields 
\begin{equation*}
\frac{1}{2}\int_{0}^{1}\left( 1-\frac{t}{2}\right) \left(
\int_{\{u=t\}}S_{t }\right) dt 
\leq 4\pi \int_{0}^{1}\left( 1-\frac{t}{2}\right) dt
\leq 3\pi.
\end{equation*}
In conclusion,

\begin{eqnarray*}
\int_{D}\left( 1-\frac{u}{2}\right) \Delta \left\vert \nabla
u\right\vert  &\geq &\frac{1}{2}\int_{D}\left( 1-\frac{u}{2}\right)
\left\vert \nabla u\right\vert -3\pi \\
&\geq &\frac{1}{4}\int_{D}\left\vert \nabla u\right\vert -3\pi.
\end{eqnarray*}%
Together with (\ref{z6}), we conclude that
\begin{equation}
\int_{D}\left\vert \nabla u\right\vert \leq 4(11\pi +C_0).  \label{z8}
\end{equation}

Recall that such hypersurface $\Sigma$ and function $u$ are  constructed for each large $R.$
We label them as 
$\Sigma_{R}:=\Sigma \subset B_{p}\left( R+L\right) \setminus B_{p}\left( R-L\right)$ and $u_R$ to indicate
their dependency on $R$ now. By (\ref{z8}),
\begin{equation}\label{uR}
\int_{D_{R}}\left\vert \nabla u_{R}\right\vert \leq C_R,
\end{equation}
where 
$$
C_R:=44\pi +2 \int_{\Gamma }\left\vert \nabla
u_R\right\vert_\nu +2 \int_{\Gamma }\left\vert \nabla
u_R\right\vert,
$$ and $u_{R}$ solves the following Dirichlet problem  
\begin{eqnarray*}
\Delta u_{R} &=&0\text{ \ \ in } D_{R}, \\
u_{R} &=&1\text{ \ on }\Gamma , \\
u_{R} &=&0\text{ \ on }\Sigma _{R}.
\end{eqnarray*}%
Suppose by contradiction that $\left( M,g\right) $ admits a positive Green's function, i.e.,  it is nonparabolic. Then
the sequence $\left\{ u_{R}\right\} $ must have a subsequence that converges
uniformly on every compact subset of $M_0$ to a nonconstant harmonic function $w$ on 
$M_0$, see Chapter 17 in \cite{Li}. Here we recall that $M_0$ denotes the unbounded connected component of $M\setminus \overline{ B_p (R_0)}$, and $\Gamma=\partial M_0$.
 
Consequently, in view of (\ref{uR}),
\begin{equation}
\int_{K}\left\vert \nabla w\right\vert \leq C_1 \label{z9}
\end{equation}%
for any compact subset $K\subset M_0$, where 
$$
C_1:=44\pi +2 \int_{\Gamma }\left\vert \nabla
w\right\vert_\nu +2 \int_{\Gamma }\left\vert \nabla
w\right\vert.
$$
 However,  $w$ being a nonconstant harmonic function implies that 
\begin{equation*}
0<\int_{\Gamma }\left( -\frac{\partial w}{\partial
\nu }\right) =\int_{\partial B_{p}\left( r\right)\cap M_0 }\left( -\frac{\partial w}{%
\partial r}\right) \leq \int_{\partial B_{p}\left( r\right) \cap M_0}\left\vert
\nabla w\right\vert.
\end{equation*}%
So the co-area formula gives that 
\begin{equation*}
\int_{\left(B_{p}\left( R\right) \setminus B_{p}\left( R_0\right) \right) \cap M_0}\left\vert \nabla
w\right\vert =\int_{R_0}^{R}\left( \int_{\partial B_{p}\left( r\right)\cap M_0
}\left\vert \nabla w\right\vert \right) dr\geq C\, (R-R_0)
\end{equation*}
for a constant $C>0.$ Since this is true for all $R>R_0$ we have reached a
contradiction to (\ref{z9}). This means that $\left( M,g\right) $ is
parabolic. 
\end{proof}

\end{document}